\newtheorem{theorem}{Theorem}[section]
\newtheorem{lemma}[theorem]{Lemma}
\newtheorem{proposition}[theorem]{Proposition}
\theoremstyle{definition}
\newtheorem{remark}[theorem]{Remark}
\numberwithin{equation}{section}
\begin{document}


\baselineskip=17pt


\title[Hecke eigenvalues over primes in short intervals]{On sum of Hecke eigenvalue squares over primes in very short intervals}

\author{Jiseong Kim}
\address{University at Buffalo, Department of Mathematics
244 Mathematics Building
Buffalo, NY 14260-2900}
\email{Jiseongk@buffalo.edu}

\date{}

\begin{abstract} 
Let $\eta>0$ be a fixed positive number, let $N$ be a sufficiently large number.
In this paper, we study the second moment of the sum of Hecke eigenvalues over primes in short intervals (whose length is $\eta \log N$) on average (with some weights) over the family of weight $k$ holomorphic Hecke cusp forms. We also generalize the above result to Hecke-Maass cusp forms for $SL(2,\mathbb{Z})$ and $SL(3,\mathbb{Z}).$ By applying the Hardy-Littlewood prime 2-tuples conjecture, we calculate the exact values of the mean values.
\end{abstract}

\subjclass[2020]{Primary 11F30; Secondary 11N05, 11F72 }

\keywords{Hecke eigenvalue; Primes; Kuznetsov Formula.}

\maketitle

\section{Introduction}\

Let $\mathbb{H}=\{z=x+iy | x \in \mathbb{R}, y \in (0,\infty)\}, G=SL(2,\mathbb{Z}).$ 
Define $j_{\gamma}(z)=(cz+d)^{-1}$ where 
$\gamma= \left(\begin{matrix}
a & b \\
c & d 
\end{matrix} \right) \in G.$
When a holomorphic function $f: \mathbb{H} \rightarrow \mathbb{C}$ satisfies
\begin{equation}f(\gamma z)= j_{\gamma}(z)^{k}f(z)\nonumber\end{equation}
for all $\gamma \in SL(2,\mathbb{Z}),$ it is called a modular form of weight $k.$ It is well known that any modular form $f(z)$ has a Fourier expansion at the cusp $\infty$
\begin{equation}
    f(z)=\sum_{n=0}^{\infty} b_{n}e(nz)
\end{equation}
where $e(z)=e^{2\pi iz},$ and the normalized Fourier coefficient $a(n)$ of $f(z)$ is defined by
\begin{equation}
a(n):=b_{n}n^{-\frac{k-1}{2}}.
\end{equation}

 The set of all modular forms of fixed weight $k$ is a vector space and we use  $M_{k}$ to denote this. And we denote the set of all modular forms in $M_{k}$ which have a zero constant term by $C_{k}.$ 
Let $T_{n}$ be the n-th Hecke operator on $C_{k},$ which means 
$$ (T_{n}f)(z):= \frac{1}{\sqrt{n}} \sum_{ad=n} \sum_{b(\rm{mod} \thinspace \it{d})} f(\frac{az+b}{d})$$
for all $f \in C_{k}.$

It is known that there is an orthonormal basis of $C_{k}$ which consists of eigenfunctions for all Hecke operators $T_{n},$ and these are called Hecke cusp forms. In this paper, we use $S_{k}$ to refer this orthonormal basis of $C_{k}.$
When $f$ is a Hecke cusp form, the eigenvalues $\lambda_{f}(n)$ of the $n$-th Hecke operator satisfy 
$$a(n)=a(1)\lambda_{f}(n).$$ 
For details, see \cite[Chapter 14]{IK1}. 

 For convenience, we assume that $k$ is always even natural number and summing over the index $p$ denotes summing over primes. $P$ denotes the set of all prime numbers.
 
For Hecke-Maass cusp forms $\psi$ for $SL(2,\mathbb{Z}),$ Y. Motohashi \cite{Motohashi} proved that there exist constants $c_{0},\theta_{0}>0$ such that uniformly for $(\log N)^{-\frac{1}{2}} \leq \theta \leq \theta_{0},$ 
\begin{equation}
    \sum_{N-y \leq p \leq N} \lambda_{\psi}(p)^{2} = \frac{y}{\log N}(1+O_{\psi}(e^{-\frac{c_{0}}{\theta}})), \thinspace  y=N^{1-\theta}
\end{equation}
for sufficiently big $N.$ Note that the main term $\frac{y}{\log N}$ in (1.3) is also the main terms of the number of primes in $[N-y,N].$ For the number of prime numbers in short intervals $[N-y,N],$ there are many results for various $y$ (see \cite{Yildirim}).  For the very short range such as $y=\eta \log N$ for any fixed $\eta>0,$ P. X. Gallagher \cite{Gallagher} proved that by assuming the Hardy-Littlewood prime $j$-tuples conjecture and using the method of moments, the number of primes in $[N-y,N]$ has a Poisson distribution $P(\eta).$  
 
Let $\pi_{\vec{d}}(N)$ be the number of positive integers $n \leq N$ such that $n-d_{1},n-d_{2},...,n-d_{j}$ are all prime where $\vec{d}=(d_{1},d_{2},...,d_{j}) \in \mathbb{N}^{j}$ and $1\leq  d_{1}<d_{2}<...<d_{j}\leq y.$ And let $\pi(N)$ be the number of primes $p \leq N.$ 
The Hardy-Littlewood prime $j$-tuples conjecture claims that 

\begin{equation}\pi_{d}(N) \sim \prod_{p} \frac{p^{j-1}}{(p-1)^{j}}(p-v_{d}(p)) \frac{N}{\log^{j}(N)}\end{equation}
where $v_{d}(p)$ is the number of distinct residue classes in $\{d_{1},d_{2},...,d_{j}\}$ modulo $p.$
 When $y=\eta \log N$ for some fixed positive number $\eta,$ P. X. Gallagher showed that by assuming (1.4) for all $j\in \mathbb{N},$

\begin{equation}\begin{split}\sum_{n\leq N} (\pi(n)-\pi(n-y))^{j}&= \sum_{n \leq N} \sum_{n-y < p_{1},...,p_{j} \leq n} 1 
\\&=\sum_{r=1}^{j} \sigma(j,r) \sum_{\vec{d}} \pi_{\vec{d}}(N)
\\&= N(m_{j}(\eta)+o(1))
\end{split}\end{equation}
where $\sigma(j,r)$ is the number of maps form $\{1,2,...,j\}$ onto $\{1,...,r\}$,  the inner sum over $\vec{d}$ means, $\vec{d}=(d_{1},d_{2},...,d_{r}) \in \mathbb{N}^{r},$ $1\leq  d_{1}<d_{2}<...<d_{r}\leq y.$ And
$$m_{j}(\eta)= \sum_{r=1}^{j} \eta^{r}S(r,j)$$ 
where $S(r,j)$ is the Stirling number of the second kind (the number of ways to partition a set $\{1,2,3,..,r \}$ into $j$ nonempty unlabelled subsets). For each $1\leq r \leq j,$ $\eta^{r} S(r,j)$ corresponds to   $\sigma(j,r)$ times the sum of $\pi_{\vec{d}}(N)$ over $\vec{d}$ where $\vec{d}=(d_{1},d_{2},...,d_{r}) \in \mathbb{N}^{r},$ $1\leq  d_{1}<d_{2}<...<d_{r}\leq y.$  Because of (1.3), one might wonder whether $\sum_{p \leq N} \lambda_{f}(p)^{2}$ acts similar to $\pi(N)$ in the sense of (1.5).
In this paper, we show that 
\begin{equation}\begin{split}& \sum_{f \in S_{k}}\frac{1}{\| f \|^{2}}\sum_{n \leq N}\Big(\sum_{n-\eta \log N\leq p \leq n} \lambda_{f}(p)^{2}\Big)^{2}
\\&= 
\sum_{f \in S_{k}} \frac{1}{\| f \|^{2}} \Big(\sum_{n \leq N} \big((\pi(n)-\pi(n-y))^{2}+ \eta+o_{\epsilon}(1)\big)\Big)\end{split}\end{equation}
for sufficiently big $k$ (depends on $N$) and sufficiently big $N,$ where  $\|f\|$ is the Petersson norm of $f$ over $C_{k}.$ (1.6) suggests that for intervals which have constant multiple of the average prime-gap length, $\lambda_{f}(p)^{2}$ act slightly different from primes. 
And by  (1.5) (assuming the Hardy-Littlewood $2$-tuples conjecture), the right-hand side of (1.6) is
\begin{equation}\sum_{f \in S_{k}} \frac{N}{\| f \|^{2}} \Big(m_{2}(\eta)+\eta+o_{\epsilon}(1)\Big),\nonumber\end{equation} 
and by applying the Petersson Trace formula (see Lemma 2.2, (2.5), this is $$N\frac{(4\pi)^{k-1}}{\Gamma(k-1)}\big(m_{2}(\eta)+\eta+o_{\epsilon}(1)\big).$$
\begin{remark}
For $j=2,$ $m_{2}(\eta)=\eta^{2}+\eta.$ For the average of higher powers $(j>2)$ as (1.6), one might expect that 
\begin{equation}\begin{split}\sum_{n \leq N}&\Big(\sum_{n-\eta \log N \leq p \leq n} \lambda_{f}(p)^{2}\Big)^{j} \\& \sim \sum_{n \leq N}\Big(\big| \pi(n- \eta \log N) - \pi(n)\big|^{j}+\big(\frac{(2j)!}{(j!)(j+1)}-S(j,j)\big)\eta^{j}\Big)\nonumber\end{split}\end{equation} for some different range of $k$ (in the sense of (1.7)). For this, we need the $2j$-th power versions of Lemma 2.3
\begin{equation}  \sum_{f \in S_{k}} \frac{1}{\|f\|^{2}} \sum_{p \leq N} \big(\lambda_{f}(p)\big)^{2j} =  \sum_{f \in S_{k}} \frac{1}{\|f\|^{2}} \big(\frac{(2j)!}{(j!)(j+1)}+o(1)\big) \frac{N}{\log N}, \end{equation}
and this is easily deduced from Hecke relations (see (2.1)).

\end{remark}
The idea of the proof of (1.6) is very simple. By applying Hecke relations, we separate 1 from some powers of $\lambda_{f}(n),$ then using the Petersson trace formula (Lemma 2.1) to treat non-constant terms. Note that the average difference $\eta$ in (1.6) comes from the digonal terms. Due to this simplicity, we get similar results like (1.6) for Hecke-Maass cusp forms for $SL(2,\mathbb{Z})$ and $SL(3,\mathbb{Z})$ (for the background of Hecke-Maass cusp form, see \cite[Chapter 3, Chapter 6]{DG}). For these, we only need the following reults come from direct applications of the Kuznetsov formula.

\begin{lemma} 
Let $\{\psi_{j}\}$ be an orthonormal basis of Hecke-Maass cusp forms for $SL(2,\mathbb{Z}),$ and let $\frac{1}{4}+t_{j}^{2}$ be the Laplace eigenvalue of $\psi_{j}.$
Let $p,q$ be distinct primes. Let $T>1,$ let $\epsilon>0$ be fixed small positive number.
Then 
\begin{equation}\begin{split}
&\sum_{j} \lambda_{\psi_{j}}(p^{2}) \frac{\zeta(2)}{L(1,\mathrm{sym}^{2}\psi_{j})} e^{-\frac{t_{j}}{T}}=O_{\epsilon}(T^{1+\epsilon}p^{\epsilon} +p^{\frac{1}{2}+\epsilon}),
\\& \sum_{j} \lambda_{\psi_{j}}(p^{2})^{2} \frac{\zeta(2)}{L(1,\mathrm{sym}^{2}\psi_{j})} e^{-\frac{t_{j}}{T}}=\frac{T^{2}}{6}+O_{\epsilon}(T^{1+\epsilon}p^{\epsilon}+p^{1+\epsilon}),
\\& \sum_{j} \frac{\zeta(2)}{L(1,\mathrm{sym}^{2}\psi_{j})} e^{-\frac{t_{j}}{T}}=\frac{T^{2}}{6}+O_{\epsilon}(T^{1+\epsilon}),
\\&  \sum_{j} \lambda_{\psi_{j}}(p^{2}) \lambda_{\psi_{j}}(q^{2}) \frac{\zeta(2)}{L(1,\mathrm{sym}^{2}\psi_{j})} e^{-\frac{t_{j}}{T}}=O_{\epsilon}(T^{1+\epsilon}(pq)^{\epsilon}+(pq)^{1+\epsilon})
\end{split}\end{equation}
where the summation over $j$ denotes the summation over all $\psi_{j}.$
\end{lemma}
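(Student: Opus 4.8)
The plan is to derive all four estimates from a single application of the Kuznetsov trace formula for $SL(2,\mathbb{Z})$, using the Hecke relations to rewrite each summand as a product $\lambda_{\psi_j}(m)\lambda_{\psi_j}(n)$ to which the formula applies directly. In the arithmetically normalized Kuznetsov formula the cuspidal spectrum is weighted precisely by the harmonic factor $\frac{\zeta(2)}{L(1,\mathrm{sym}^2\psi_j)}$ (up to an absolute constant), so that
\[
\sum_j \frac{\zeta(2)}{L(1,\mathrm{sym}^2\psi_j)}\lambda_{\psi_j}(m)\lambda_{\psi_j}(n)\,h(t_j) + (\mathrm{Eis}) = \kappa\,\delta_{m,n}\int_{0}^{\infty} h(t)\,t\tanh(\pi t)\,dt + (\mathrm{Kl}),
\]
where $(\mathrm{Eis})$ is the Eisenstein (continuous spectrum) contribution, $(\mathrm{Kl})=\sum_{c\ge 1}\frac{S(m,n;c)}{c}\,H\!\left(\frac{4\pi\sqrt{mn}}{c}\right)$ is the Kloosterman term with $H$ the Bessel transform of $h$, and $\kappa=\zeta(2)/\pi^2=\tfrac16$ is the normalizing constant. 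I would first fix an even, admissible test function agreeing with (or closely majorizing/minorizing) $e^{-t/T}$ on $t>0$; since $\int_0^\infty e^{-t/T}t\tanh(\pi t)\,dt = T^2 + O(1)$, the diagonal produces exactly $\frac{T^2}{6}+O(1)$ whenever $m=n$ and nothing otherwise.

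Next I would read off the four cases by choosing $(m,n)$ via the Hecke relation $\lambda_{\psi_j}(p)^2 = \lambda_{\psi_j}(p^2)+1$ (see (2.1)). For the third identity take $m=n=1$: the diagonal gives the stated main term $\frac{T^2}{6}$. The first identity follows by writing $\lambda_{\psi_j}(p^2) = \lambda_{\psi_j}(p)^2-1$ and subtracting the $m=n=1$ formula from the $m=n=p$ formula; since $\delta_{p,p}=\delta_{1,1}=1$, the two $\frac{T^2}{6}$ diagonals cancel and only error terms survive. For the second identity, $\lambda_{\psi_j}(p^2)^2 = \lambda_{\psi_j}(p^2)\lambda_{\psi_j}(p^2)$ is already a product, so I take $m=n=p^2$ and the diagonal $\delta_{p^2,p^2}=1$ yields the main term $\frac{T^2}{6}$. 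For the fourth, $\lambda_{\psi_j}(p^2)\lambda_{\psi_j}(q^2)$ is the product with $m=p^2$, $n=q^2$; since $p\ne q$ we have $\delta_{p^2,q^2}=0$, so there is no main term, matching the stated pure error bound.

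It remains to bound the two off-diagonal contributions. For the Eisenstein term, using $|\sigma_{2it}(n)|\le d(n)\ll n^\epsilon$ together with $|\zeta(1+2it)|^{-1}\ll \log(2+|t|)$, the relevant integral is $\ll T^{1+\epsilon}(mn)^\epsilon$; this yields the $T^{1+\epsilon}p^\epsilon$ term in the first three identities (where $mn$ is a power of $p$) and the $T^{1+\epsilon}(pq)^\epsilon$ term in the fourth. For the Kloosterman term I would invoke Weil's bound $S(m,n;c)\ll (m,n,c)^{1/2}c^{1/2+\epsilon}$ and standard size and oscillation estimates for the Bessel transform $H$, splitting the sum over $c$ at the transition $c\asymp \sqrt{mn}$; the large-$c$ range is absorbed into $T^{1+\epsilon}(mn)^\epsilon$, while the small-$c$ range produces the polynomial factors $p^{1/2+\epsilon}$, $p^{1+\epsilon}$ and $(pq)^{1+\epsilon}$ respectively (here $\sqrt{mn}$ equals $p$, $p^2$ and $pq$). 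The main obstacle is precisely this Kloosterman analysis: because the natural weight $e^{-t/T}$ is not holomorphic, one must either replace it by an admissible even function (controlling the replacement error) or estimate its Bessel transform directly, and then extract the correct powers of $p$ and $T$ uniformly across the oscillatory transition range of the Bessel function. Collecting the diagonal main term with the $(\mathrm{Eis})$ and $(\mathrm{Kl})$ bounds then gives the four displayed formulas.
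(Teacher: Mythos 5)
Your proposal is correct and takes essentially the same route as the paper, which in fact gives no argument of its own for this lemma beyond the citation \cite[Lemma 1]{BB1}: the cited source proves exactly these estimates by the Kuznetsov-formula analysis you outline (arithmetic weights $\frac{\zeta(2)}{L(1,\mathrm{sym}^{2}\psi_{j})}$ from the normalization $|\rho_{j}(1)|^{2}/\cosh(\pi t_{j})$, diagonal main term $\frac{T^{2}}{6}$, Hecke relations reducing each case to a pair $(m,n)$ with $(mn)^{\frac{1}{4}+\epsilon}$-quality Kloosterman bounds via Weil plus Bessel-transform estimates, and divisor-function bounds for the Eisenstein contribution). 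The one step you leave unexecuted --- replacing the non-admissible weight $e^{-t/T}$ by an even holomorphic test function (or estimating its Bessel transform directly) uniformly in $T$ --- is indeed where the technical work lies, and you correctly identify it rather than glossing over it.
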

\begin{proof}
See \cite[Lemma 1]{BB1}.
\end{proof}
By applying Lemma 1.2 with the methods in the proof of Proposition 3.1, we get the following theorem.
\begin{theorem}Assume the Hardy-Littlewood prime $2$-tuples conjecture. Let $\epsilon>0, \eta>0$ be fixed positive numbers. Let $N$ be a sufficiently big number, and $y=\eta \log N.$ Let $T$ be a positive number such that $\thinspace N^{1+\epsilon}=o_{\epsilon}(T).$
Then
\begin{equation} \begin{split}
\frac{1}{N} \sum_{j} \frac{\zeta(2)}{L(1,\mathrm{sym}^{2}\psi_{j})} & e^{-\frac{t_{j}}{T}} \sum_{n \leq N} \Big(\sum_{n-\eta \log N\leq p \leq n} \lambda_{\psi_{j}}(p)^{2}\Big)^{2}  \\&=  \sum_{j} \frac{\zeta(2)}{L(1,\mathrm{sym}^{2}\psi_{j})}e^{-\frac{t_{j}}{T}}\big(m_{2}(\eta)+\eta+o_{\epsilon}(1)\big) \\&=\frac{T^{2}}{6}\big(m_{2}(\eta)+\eta+o_{\epsilon}(1)\big).
\nonumber  
\end{split}\end{equation}
\end{theorem}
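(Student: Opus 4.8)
The plan is to mirror the structure of the holomorphic case (1.6) but with the Petersson trace formula replaced by the Kuznetsov-type averages collected in Lemma 1.2. First I would expand the inner square, writing
\begin{equation}
\Big(\sum_{n-y \leq p \leq n} \lambda_{\psi_{j}}(p)^{2}\Big)^{2}
= \sum_{n-y \leq p,q \leq n} \lambda_{\psi_{j}}(p)^{2}\lambda_{\psi_{j}}(q)^{2},
\nonumber
\end{equation}
and split the double sum over primes into the diagonal part $p=q$ and the off-diagonal part $p\neq q$. For the diagonal I would use the Hecke relation $\lambda_{\psi_{j}}(p)^{2}=\lambda_{\psi_{j}}(p^{2})+1$ (the $SL(2,\mathbb{Z})$ analogue of (2.1)), so that $\lambda_{\psi_{j}}(p)^{4}=\lambda_{\psi_{j}}(p^{2})^{2}+2\lambda_{\psi_{j}}(p^{2})+1$; for the off-diagonal I would write $\lambda_{\psi_{j}}(p)^{2}\lambda_{\psi_{j}}(q)^{2}=\lambda_{\psi_{j}}(p^{2})\lambda_{\psi_{j}}(q^{2})+\lambda_{\psi_{j}}(p^{2})+\lambda_{\psi_{j}}(q^{2})+1$. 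This is exactly the step ``separate $1$ from some powers of $\lambda$'' described in the introduction, and it isolates the constant term $1$ whose $j$-average will produce the arithmetic main term.

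Next I would interchange the order of summation, bringing the weighted sum over $j$ inside, and apply the four estimates of Lemma 1.2 term by term. The constant-term contribution $\sum_{j}\frac{\zeta(2)}{L(1,\mathrm{sym}^{2}\psi_{j})}e^{-t_{j}/T}$ equals $\frac{T^{2}}{6}+O_{\epsilon}(T^{1+\epsilon})$ and, multiplied by the number of pairs $(p,q)$ of primes in each interval, reconstructs the prime-counting term. In the diagonal $p=q$ block the genuinely new contribution comes from the square term $\lambda_{\psi_{j}}(p^{2})^{2}$, whose $j$-average is $\frac{T^{2}}{6}+O_{\epsilon}(T^{1+\epsilon}p^{\epsilon}+p^{1+\epsilon})$; summed over the roughly $\pi(n)-\pi(n-y)$ primes in $[n-y,n]$ and then over $n\leq N$, this furnishes the extra $\eta$ after one invokes the prime-counting input. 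The linear terms $\lambda_{\psi_{j}}(p^{2})$ and the genuine off-diagonal term $\lambda_{\psi_{j}}(p^{2})\lambda_{\psi_{j}}(q^{2})$ contribute only error, bounded by the corresponding estimates in Lemma 1.2.

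The main obstacle is bookkeeping the size of the error terms against the hypothesis $N^{1+\epsilon}=o_{\epsilon}(T)$. After interchanging summations one has an outer sum over $n\leq N$ and inner sums over primes $p,q\leq N$, so each application of Lemma 1.2 gets multiplied by factors of order $N$ (from the $n$-sum) and by the number of primes in the short intervals. One must check that every error term, once aggregated, is $o_{\epsilon}(T^{2})\cdot N$ so that after dividing by $N$ and comparing against the main term $\frac{T^{2}}{6}(m_{2}(\eta)+\eta)$ it is absorbed into $o_{\epsilon}(1)$; the terms $p^{1+\epsilon}$ and $(pq)^{1+\epsilon}$ with $p,q\leq N$ are precisely where the condition $T\gg N^{1+\epsilon}$ is needed, since $N^{2+\epsilon}=o_{\epsilon}(T^{2})$ under that hypothesis. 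Finally, to convert the prime-pair count
\begin{equation}
\frac{1}{N}\sum_{n\leq N}\big(\pi(n)-\pi(n-y)\big)^{2}
\nonumber
\end{equation}
into $m_{2}(\eta)=\eta^{2}+\eta$, I would invoke Gallagher's computation (1.5) under the assumed Hardy--Littlewood prime $2$-tuples conjecture, exactly as in the holomorphic case, which yields the stated closed form $\frac{T^{2}}{6}(m_{2}(\eta)+\eta+o_{\epsilon}(1))$.
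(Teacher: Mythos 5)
Your proposal is correct and follows essentially the same route as the paper: expand the square, split into diagonal and off-diagonal terms, use the Hecke relation $\lambda_{\psi_j}(p)^2=\lambda_{\psi_j}(p^2)+1$ to isolate the constants, apply the four estimates of Lemma 1.2 term by term (with the errors absorbed exactly via $N^{1+\epsilon}=o_{\epsilon}(T)$, so $N^{2+\epsilon}=o_{\epsilon}(T^2)$), and finish with Gallagher's computation under the Hardy--Littlewood $2$-tuples conjecture. Your identification of the extra $\eta$ as coming from the diagonal term $\lambda_{\psi_j}(p^2)^2$, whose $j$-average matches the main term $\frac{T^2}{6}$, is precisely the paper's mechanism (the $(2+o(1))$ splitting in its analogue of (3.4)).
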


 Let $\{ \phi_{j}\}$ be an orthonormal basis of Hecke-Maass cusp forms for $SL(3,\mathbb{Z}),$ and let $\{A_{j}(n,1)\}$ be the Hecke eigenvalues.
Let $v_{j}$ be the Laplacian eigenvalue of $\phi_{j}.$  By Hecke relations,
\begin{equation} \begin{split} &|A_{j}(p,1)|^{2}=A_{j}(p,p)+1,\\& |A_{j}(p,1)|^{4}=A_{j}(p,p)^{2}+2A_{j}(p,p)+1 \end{split} \end{equation} (see  \cite[Theorem 6.4.11]{DG}). 
We need the following result comes from the GL(3)-Kuznetsov formula.
\begin{lemma} Let $p,q$ be distinct primes, let $T >1.$ Then
\begin{equation} \begin{split}
&\sum_{j} \frac{A_{j}(p,p)^{2}}{\underset{s=1}{\mathrm{Res}}\; L(s,\phi_{j} \times \bar{\phi}_{j})} e^{-\frac{v_{j}}{T^{2}}} = \frac{\sqrt{3}}{2^{7} \pi^{\frac{9}{2}}} T^{5}+ O_{\epsilon}(p^{4+\epsilon}T^{\frac{37}{8}+\epsilon}),
\\& \sum_{j} \frac{A_{j}(p,p)}{\underset{s=1}{\mathrm{Res}}\; L(s,\phi_{j} \times \bar{\phi}_{j})} e^{-\frac{v_{j}}{T^{2}}} = O_{\epsilon}(p^{2+\epsilon}T^{\frac{37}{8}+\epsilon}),
\\& \sum_{j} \frac{1}{\underset{s=1}{\mathrm{Res}}\; L(s,\phi_{j} \times \bar{\phi}_{j})} e^{-\frac{v_{j}}{T^{2}}} = \frac{\sqrt{3}}{2^{7} \pi^{\frac{9}{2}}} T^{5}+ O_{\epsilon}(T^{\frac{37}{8}+\epsilon})
\\& \sum_{j} \frac{A_{j}(p,p)A_{j}(q,q)}{\underset{s=1}{\mathrm{Res}}\; L(s,\phi_{j} \times \bar{\phi}_{j})} e^{-\frac{v_{j}}{T^{2}}} = O_{\epsilon}((pq)^{2+\epsilon}T^{\frac{37}{8}+\epsilon}).
\end{split} \end{equation}
where the summation over $j$ denotes the summation over all $\phi_{j},$ $\underset{s=1}{\mathrm{Res}}$ denotes the residue at $s=1.$

\end{lemma}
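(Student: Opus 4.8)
The plan is to obtain all four estimates from a single application of the $GL(3)$-Kuznetsov (relative trace) formula in its harmonic-weight normalization, which schematically reads
\begin{equation}
\sum_{j} \frac{\overline{A_{j}(m_{1},m_{2})}\,A_{j}(n_{1},n_{2})}{\underset{s=1}{\mathrm{Res}}\;L(s,\phi_{j}\times\bar{\phi}_{j})}\,H(v_{j}) + (\mathrm{Eis}) = \delta_{(m_{1},m_{2})=(n_{1},n_{2})}\,M(H) + \sum_{w}\mathcal{K}_{w}(m,n;H),\nonumber
\end{equation}
where $M(H)$ is the identity (diagonal) contribution, the last sum runs over the nontrivial Weyl elements $w$, the $\mathcal{K}_{w}$ are the associated $GL(3)$ Kloosterman terms weighted by the archimedean transform of $H$, and $(\mathrm{Eis})$ denotes the continuous-spectrum contribution. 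The denominator $\underset{s=1}{\mathrm{Res}}\;L(s,\phi_{j}\times\bar{\phi}_{j})$ is precisely the harmonic weight attached to this formula, which is why it appears in the statement. I would take the test function so that $H(v_{j})=e^{-v_{j}/T^{2}}$.

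The first step is purely arithmetic. Using the conjugation relation $\overline{A_{j}(m,n)}=A_{j}(n,m)$ (so that $A_{j}(p,p)$ is real) together with multiplicativity of the Hecke eigenvalues at coprime indices, I would rewrite each numerator in the form $\overline{A_{j}(m_{1},m_{2})}A_{j}(n_{1},n_{2})$: for the first sum $A_{j}(p,p)^{2}=\overline{A_{j}(p,p)}A_{j}(p,p)$ with $(m_{1},m_{2})=(n_{1},n_{2})=(p,p)$; for the third $1=\overline{A_{j}(1,1)}A_{j}(1,1)$ with $(m_{1},m_{2})=(n_{1},n_{2})=(1,1)$; for the second $A_{j}(p,p)=\overline{A_{j}(1,1)}A_{j}(p,p)$ with the unequal index pairs $(1,1)\neq(p,p)$; and for the fourth, since $p\neq q$, $A_{j}(p,p)A_{j}(q,q)=A_{j}(pq,pq)=\overline{A_{j}(1,1)}A_{j}(pq,pq)$ with $(1,1)\neq(pq,pq)$. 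Consequently the diagonal term $\delta_{(m_{1},m_{2})=(n_{1},n_{2})}M(H)$ survives exactly for the first and third sums and vanishes for the second and fourth. This already explains the dichotomy in the statement between the two sums that carry the main term $\tfrac{\sqrt{3}}{2^{7}\pi^{9/2}}T^{5}$ and the two that are pure error terms.

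The remaining work splits into computing $M(H)$ and bounding the Kloosterman side. With the Gaussian weight, $M(H)$ is a Plancherel-measure integral over the two-dimensional $GL(3)$ spectral parameter; since that measure grows quadratically in the parameter, integrating $e^{-v/T^{2}}$ produces a leading term of order $T^{5}$, and evaluating the explicit constant yields $\tfrac{\sqrt{3}}{2^{7}\pi^{9/2}}T^{5}$ as claimed. The hardest step will be bounding $\sum_{w}\mathcal{K}_{w}(m,n;H)$ with the precise exponent $T^{37/8+\epsilon}$ together with the stated polynomial dependence on the moduli ($p^{4+\epsilon}$, $p^{2+\epsilon}$, $1$, and $(pq)^{2+\epsilon}$, respectively). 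This demands estimating each of the three nontrivial Weyl-element contributions by combining bounds for $GL(3)$ Kloosterman sums with a uniform analysis of the archimedean integral transform of $H$; the polynomial-in-$p$ factors track the Hecke indices entering the moduli, while the $37/8$ exponent reflects the best $T$-uniform control of these transforms and sums. The Eisenstein contribution must likewise be shown to be absorbed into these error terms. I would carry out this estimate by invoking the existing analytic machinery for the $GL(3)$-Kuznetsov formula rather than re-deriving the Bessel kernels from scratch, and I expect this Kloosterman estimate to be the principal obstacle.
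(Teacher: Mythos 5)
The first thing to note is that the paper does not prove this lemma at all: its entire proof is the citation \cite[Theorem 5]{BB1}, so the fair comparison is with the proof in that reference. Measured against it, your structural skeleton is accurate and is indeed how the cited result is organized: the harmonic weight $1/\underset{s=1}{\mathrm{Res}}\,L(s,\phi_{j}\times\bar{\phi}_{j})$ is exactly the normalization of the $GL(3)$-Kuznetsov formula; the relation $\overline{A_{j}(m,n)}=A_{j}(n,m)$ makes $A_{j}(p,p)$ real; multiplicativity gives $A_{j}(p,p)A_{j}(q,q)=A_{j}(pq,pq)$ for $p\neq q$; and the diagonal term survives precisely in the first and third sums, which is why only those carry the main term $\frac{\sqrt{3}}{2^{7}\pi^{9/2}}T^{5}$. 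All of that is correct and matches the actual argument.

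The genuine gap is that the entire quantitative content of the lemma --- the exponent $T^{37/8+\epsilon}$ and the moduli powers $p^{4+\epsilon}$, $p^{2+\epsilon}$, $(pq)^{2+\epsilon}$, together with the absorption of the Eisenstein (continuous-spectrum) contribution --- is deferred to unspecified ``existing analytic machinery,'' and that machinery is precisely what the cited theorem consists of. The exponent $37/8$ is not generic: it comes from specific uniform bounds on the archimedean integral transforms attached to the nontrivial Weyl elements (the $w_{4}$, $w_{5}$ and long-element kernels in Blomer's formula) combined with bounds for the associated $GL(3)$ Kloosterman sums, and the moduli dependence has to be tracked through the Kloosterman moduli $D_{1},D_{2}$ divisibility conditions imposed by the Hecke indices $(p,p)$, $(pq,pq)$, etc.; none of this is identified, let alone verified, in your sketch, so what you have is an accurate plan rather than a proof. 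A smaller but real slip: you say the Plancherel density ``grows quadratically'' in the spectral parameter. On the two-dimensional $GL(3)$ spectral parameter space the density is a product of three essentially linear factors (one per root), hence cubic; with a genuinely quadratic density your Gaussian integral would produce $T^{4}$, not the claimed $T^{5}$, so the stated main term is only consistent with the cubic growth.
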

\begin{proof}
See \cite[Theorem 5]{BB1}.
\end{proof}

By applying Lemma 1.4 with the methods in the proof of Proposition 3.1, we get the following theorem.
\begin{theorem}Assume the Hardy-Littlewood prime $2$-tuples conjecture. Let $\epsilon>0, \eta>0$ be fixed positive numbers. Let $N$ be a sufficiently big number, and $y=\eta \log N.$ Let $T$ be a positive number such that $\thinspace N^{\frac{32}{3}+2\epsilon}=o_{\epsilon}(T).$
Then
\begin{equation} \begin{split}
\frac{1}{N} \sum_{j}\frac{1}{\underset{s=1}{\mathrm{Res}}\; L(s,\phi_{j} \times \bar{\phi}_{j})} e^{-\frac{v_{j}}{T^{2}}} &\sum_{n \leq N} \Big(\sum_{n-\eta \log N\leq p \leq n} |A_{j}(p,1)|^{2}\Big)^{2}  \\&=  \sum_{j} \frac{1}{\underset{s=1}{\mathrm{Res}}\; L(s,\phi_{j} \times \bar{\phi}_{j})} e^{-\frac{v_{j}}{T^{2}}}\big(m_{2}(\eta)+\eta+o_{\epsilon}(1)\big) \\&= \frac{\sqrt{3}}{2^{7} \pi^{\frac{9}{2}}} T^{5}\big(m_{2}(\eta)+\eta+o_{\epsilon}(1)\big).
\nonumber  
\end{split}\end{equation}
\end{theorem}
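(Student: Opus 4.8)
The plan is to mimic the proof of Proposition~3.1, feeding in the $GL(3)$ spectral sums of Lemma~1.4 in place of the $GL(2)$ input used there. Abbreviate $w_j=\big(\underset{s=1}{\mathrm{Res}}\,L(s,\phi_j\times\bar{\phi}_j)\big)^{-1}e^{-v_j/T^2}$ and $c=\frac{\sqrt{3}}{2^{7}\pi^{9/2}}$, so that the third estimate of Lemma~1.4 reads $\sum_j w_j = cT^5+O_\epsilon(T^{37/8+\epsilon})$. First I would pull the spectral sum inside, interchanging it with the sums over $n$ and over the two primes, and expand the square, so that the left-hand side (before dividing by $N$) becomes
\[
\sum_{n\leq N}\ \sum_{n-y\leq p_1,p_2\leq n}\ \sum_j w_j\,|A_j(p_1,1)|^2\,|A_j(p_2,1)|^2 .
\]
I would then use the Hecke relations (1.9) to linearise the summand in the $A_j(p,p)$: for $p_1\neq p_2$ the first relation gives $|A_j(p_1,1)|^2|A_j(p_2,1)|^2=A_j(p_1,p_1)A_j(p_2,p_2)+A_j(p_1,p_1)+A_j(p_2,p_2)+1$, while on the diagonal the second relation gives $|A_j(p,1)|^4=A_j(p,p)^2+2A_j(p,p)+1$.

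Next I would evaluate each resulting spectral sum by Lemma~1.4. For an off-diagonal pair only the constant $1$ contributes a main term, giving $\sum_j w_j\,|A_j(p_1,1)|^2|A_j(p_2,1)|^2=cT^5+O_\epsilon\big((p_1p_2)^{2+\epsilon}T^{37/8+\epsilon}\big)$, since the three non-constant sums are bounded by the second and fourth estimates of Lemma~1.4. On the diagonal both the constant $1$ and the square $A_j(p,p)^2$ contribute a main term $cT^5$ each, so $\sum_j w_j\,|A_j(p,1)|^4=2cT^5+O_\epsilon\big(p^{4+\epsilon}T^{37/8+\epsilon}\big)$. Collecting main terms, the number of off-diagonal triples is $\sum_{n\leq N}\big[(\pi(n)-\pi(n-y))^2-(\pi(n)-\pi(n-y))\big]$ and the number of diagonal pairs is $\sum_{n\leq N}(\pi(n)-\pi(n-y))$, so the main term equals
\[
cT^5\Big[\sum_{n\leq N}(\pi(n)-\pi(n-y))^2+\sum_{n\leq N}(\pi(n)-\pi(n-y))\Big].
\]
Invoking Gallagher's evaluation (1.5) under the Hardy--Littlewood $2$-tuples conjecture, namely $\sum_{n\leq N}(\pi(n)-\pi(n-y))^2=N(m_2(\eta)+o(1))$, together with the $j=1$ case $\sum_{n\leq N}(\pi(n)-\pi(n-y))=N(\eta+o(1))$, this becomes $cT^5N(m_2(\eta)+\eta+o(1))$. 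Dividing by $N$ and writing $cT^5=\sum_j w_j\,(1+o(1))$ then yields both displayed equalities; note that the extra $\eta$ beyond the ``prime-like'' value $m_2(\eta)$ is exactly the contribution of the diagonal $A_j(p,p)^2$ term.

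The step that needs genuine care, and the only real obstacle, is checking that the accumulated error is negligible against the main term. Summing the off-diagonal errors over all admissible triples $(n,p_1,p_2)$ costs a factor $(p_1p_2)^{2+\epsilon}\leq N^{4+\epsilon}$ and a factor $O(N)$ for the number of triples (by (1.5), with $\eta$ fixed), and the diagonal errors are controlled identically with $p^{4+\epsilon}\leq N^{4+\epsilon}$; in total the error in the last display is $O_\epsilon(N^{5+\epsilon}T^{37/8+\epsilon})$. After dividing by $N$ this is $O_\epsilon(N^{4+\epsilon}T^{37/8+\epsilon})$, and since $\tfrac{37}{8}-5=-\tfrac38$ it is $o(T^5)$ precisely when $T^{3/8}$ dominates $N^{4+\epsilon}$, i.e.\ when $T\gg N^{32/3}$ up to an arbitrarily small power of $N$. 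The hypothesis $N^{32/3+2\epsilon}=o_\epsilon(T)$ supplies exactly this margin, once the exponent $\epsilon$ coming from Lemma~1.4 is taken small relative to the fixed $\epsilon$ of the statement. Everything else is a direct transcription of the $GL(2)$ argument of Proposition~3.1, with $\frac{\zeta(2)}{L(1,\mathrm{sym}^2\psi_j)}e^{-t_j/T}$ and the estimates of Lemma~1.2 replaced by $w_j$ and the estimates of Lemma~1.4.
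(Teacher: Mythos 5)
Your proposal is correct and follows essentially the same route as the paper's own proof of Theorem 1.5: squaring out, splitting into diagonal and off-diagonal terms, linearising via the Hecke relations (1.9), evaluating each spectral sum with Lemma 1.4 (with the extra $\eta$ arising from the diagonal $A_j(p,p)^2$ main term), and checking that the error $O_\epsilon(N^{4+\epsilon}T^{37/8+\epsilon})$ per term is absorbed under the hypothesis $N^{32/3+2\epsilon}=o_\epsilon(T)$. The only cosmetic difference is that you bound the number of prime pairs by $O(N)$ via (1.5) and normalise by $cT^5$ directly, whereas the paper keeps the pair-count $\sum_{n\leq N}1_{n-d_1,n-d_2\in P}$ symbolic and compares errors against $\sum_j w_j$ before invoking the Hardy--Littlewood conjecture at the end.
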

\section{Lemmas}
 By Hecke relations, we have
\begin{equation} \lambda_{f}(p)^{2}= \lambda_{f}(p^{2}) + 1.\end{equation}
Later, we need to deal with some off-diagonal terms $\lambda_{f}(p^{2})\lambda_{f}(q^{2})$ where $p\in P, \thinspace q \in P, \thinspace  q \neq p$ or $p\in P,\thinspace q=1.$  For this, we will use the following lemma.
\begin{lemma} {\bf (Trace formula)}
For any two natural numbers $m$ and $n,$
\begin{equation}\begin{split}
\sum_{f \in S_{k}} \frac{\lambda_{f}(n) \lambda_{f}(m)}{\|f\|^{2}}&=\frac{(4\pi)^{k-1}}{\Gamma(k-1)}\delta(m-n)\\&+O\Big(\frac{(4\pi)^{k-1}}{\Gamma(k-1)}\big( (\log (3mn))^{2} \frac{d\big((m,n)\big)(mn)^{\frac{1}{4}}}{k^{\frac{1}{2}}}\big) \Big)   
\end{split}\end{equation}
where $\delta$ is the delta function, $d$ is the divisor function, the implied constant is absolute, and $\|f\|$ is the Petersson norm of $f$ over $C_{k}.$
\end{lemma}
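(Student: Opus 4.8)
The plan is to derive the estimate from the exact Petersson trace formula and then bound the resulting Kloosterman--Bessel remainder. In the normalization of the present paper (with $\|f\|$ the Petersson norm), the Petersson formula \cite[Chapter 14]{IK1} takes the form
\begin{equation}
\sum_{f \in S_k}\frac{\lambda_f(m)\lambda_f(n)}{\|f\|^2}
=\frac{(4\pi)^{k-1}}{\Gamma(k-1)}\Big(\delta(m-n)+2\pi i^{-k}\sum_{c=1}^{\infty}\frac{S(m,n;c)}{c}\,J_{k-1}\Big(\frac{4\pi\sqrt{mn}}{c}\Big)\Big),\nonumber
\end{equation}
where $S(m,n;c)$ denotes the Kloosterman sum and $J_{k-1}$ the $J$-Bessel function; the powers $(mn)^{(k-1)/2}$ produced by passing from the unnormalized Fourier coefficients to the $\lambda_f$ cancel against the $(\sqrt{mn})^{k-1}$ in the classical weight, which is what leaves the clean constant $(4\pi)^{k-1}/\Gamma(k-1)$ in front. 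Thus the diagonal term already yields the stated main term, and it remains only to show
\begin{equation}
\Big|\sum_{c=1}^{\infty}\frac{S(m,n;c)}{c}\,J_{k-1}\Big(\frac{4\pi\sqrt{mn}}{c}\Big)\Big|\ll (\log(3mn))^2\,\frac{d((m,n))\,(mn)^{1/4}}{k^{1/2}}.\nonumber
\end{equation}

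For the Kloosterman sums I would insert Weil's bound $|S(m,n;c)|\le d(c)\,(m,n,c)^{1/2}c^{1/2}$, reducing the task to estimating
\begin{equation}
\sum_{c=1}^{\infty}\frac{d(c)\,(m,n,c)^{1/2}}{c^{1/2}}\,\Big|J_{k-1}\Big(\frac{4\pi\sqrt{mn}}{c}\Big)\Big|.\nonumber
\end{equation}
The greatest common divisor is handled by writing $e=(m,n,c)$, which runs over divisors of $(m,n)$, and summing $c$ over multiples of $e$; the elementary divisor sums that result contribute the factor $d((m,n))$ together with the two logarithms recorded in $(\log(3mn))^2$ (the shift by $3$ only serves to keep the bound meaningful when $m=n=1$).

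The heart of the argument, and the step I expect to be the main obstacle, is the estimation of $J_{k-1}$. Writing $X=4\pi\sqrt{mn}$, the transition of $J_{k-1}$ occurs where the argument $X/c$ is comparable to $k$, i.e. near $c_0=X/(k-1)$, and I would split the $c$-sum accordingly. For $c\gg c_0$ the argument is $\ll k$ and the uniform bound $|J_{k-1}(x)|\le (x/2)^{k-1}/\Gamma(k)$ makes the contribution exponentially small in $k$, hence negligible; in particular, when $X<k$ (precisely the range relevant to the application, where $k$ is taken enormous compared with $m,n$) every term already lies in this regime. For $c\ll c_0$ one is in the oscillatory range and the standard bound $|J_{k-1}(x)|\ll x^{-1/2}$ gives a contribution $\ll X^{-1/2}\sum_{c\le c_0}d(c)(m,n,c)^{1/2}\ll (mn)^{1/4}(\log(3mn))^2 d((m,n))/k$; finally, in the narrow transition band $c\asymp c_0$ the bound $|J_{k-1}(x)|\ll k^{-1/3}$ is used, and a short computation shows this also contributes $\ll (mn)^{1/4}d((m,n))/k^{1/2}$. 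Since each piece is at most $(mn)^{1/4}(\log(3mn))^2 d((m,n))\,k^{-1/2}$, collecting them yields the claimed remainder bound, and hence the lemma. The delicate points are the uniform control of the Bessel function across the Airy transition region and the bookkeeping of the divisor and gcd weights; the stated exponent $k^{-1/2}$ is a deliberately clean, non-optimal choice, the individual ranges in fact saving a larger power of $k$.
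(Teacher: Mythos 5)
Your argument is correct, but it is worth noting what the paper actually does here: its ``proof'' of this lemma is a one-line citation to \cite[Corollary 14.24, Theorem 16.7]{IK1}, and what you have written out is essentially the standard derivation \emph{of} that cited corollary --- the exact Petersson formula, Weil's bound $|S(m,n;c)|\le d(c)(m,n,c)^{1/2}c^{1/2}$, and the three-range treatment of $J_{k-1}$ (exponential decay from $(x/2)^{k-1}/\Gamma(k)$ below the transition point, $k^{-1/3}$ in the Airy region, $x^{-1/2}$ in the oscillatory range), with Theorem 16.7 supplying the translation between harmonic weights and the Petersson norms $\|f\|^{2}$ appearing in the statement. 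So you reconstruct from scratch what the paper outsources; the citation buys brevity, while your route makes visible why the error term has the shape it does. Two remarks on the details: your accounting is consistent --- the oscillatory range gives $\ll (mn)^{1/4}(\log 3mn)^{2}d((m,n))k^{-1}$ and the transition band gives $\ll (mn)^{1/4}(\log 3mn)^{2}d((m,n))k^{-5/6}$, so the true saving is $k^{-5/6}$ (which is what Iwaniec--Kowalski state) and the lemma's $k^{-1/2}$ is indeed a weakening, as you observe; and the one point you should not wave at, the uniformity in the order of the bounds $J_{k-1}(x)\ll k^{-1/3}$ and $J_{k-1}(x)\ll x^{-1/2}$ for $x\gg k$, follows from the standard uniform estimate $J_{\nu}(x)\ll x^{-1/4}(x-\nu)^{-1/4}$ for $x>\nu+\nu^{1/3}$ together with the Airy-region bound, so your flagged ``delicate point'' is genuinely standard. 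Also, as you note, in the regime where the lemma is applied ($N^{2+\epsilon}=o(k)$, so $4\pi\sqrt{mn}<k$) every term of the $c$-sum sits in the exponentially small range, so the full strength of the transition analysis is never even needed there.
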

\begin{proof}
See \cite[Corollary 14.24, Theorem 16.7]{IK1}.
\end{proof}

By Lemma 2.1, we get the following Lemma.
\begin{lemma} Let $p$ be a prime. Let $f \in S_{k}.$ Then
 \begin{equation} \sum_{f \in S_{k}} \frac{\lambda_{f}(p^{2})}{ \|f\|^{2}}=O\Big(\frac{(4\pi)^{k-1}}{\Gamma(k-1)}\big( (\log (3p^{2}))^{2} \frac{p^{\frac{1}{2}}}{k^{\frac{1}{2}}}\big) \Big), \end{equation}
\begin{equation}\sum_{f \in S_{k}} \frac{\lambda_{f}(p^{2})^{2}}{\|f\|^{2}} =\frac{(4\pi)^{k-1}}{\Gamma(k-1)}+O\Big(\frac{(4\pi)^{k-1}}{\Gamma(k-1)}(\log (3p^{4})^{2}\frac{p}{k^{\frac{1}{2}}}\Big),\end{equation}
\begin{equation} \sum_{f \in S_{k}} \frac{1}{\|f\|^{2}} =\frac{(4\pi)^{k-1}}{\Gamma(k-1)}+O\Big(\frac{(4\pi)^{k-1}}{k^{\frac{1}{2}}\Gamma(k-1)}\Big). \end{equation}

\end{lemma}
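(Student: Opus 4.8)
The plan is to derive all three estimates directly from the trace formula (Lemma 2.1), handling (2.3) and (2.5) as immediate specializations and reducing (2.4) to them by a Hecke relation.

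First I would dispose of (2.5) and (2.3), since each is just a choice of $(m,n)$ in Lemma 2.1. For (2.5), apply the formula with $m=n=1$; because $\lambda_f(1)=1$ the left-hand side is exactly $\sum_{f\in S_k}\|f\|^{-2}$, the delta term contributes $\delta(0)=1$ to give the main term $\frac{(4\pi)^{k-1}}{\Gamma(k-1)}$, and the error factor $(\log 3)^2\, d((1,1))\cdot 1^{1/4}/k^{1/2}$ collapses to $O(k^{-1/2})$ after multiplication by $\frac{(4\pi)^{k-1}}{\Gamma(k-1)}$, which is the claimed bound. For (2.3), write $\lambda_f(p^2)=\lambda_f(p^2)\lambda_f(1)$ and apply the formula with $n=p^2$, $m=1$; since $p^2\neq 1$ the delta term vanishes, $(p^2,1)=1$ forces $d((m,n))=1$, and $(mn)^{1/4}=p^{1/2}$, producing precisely the stated estimate.

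The one step requiring slightly more than specialization is (2.4), because $\lambda_f(p^2)^2$ is not itself of the form $\lambda_f(m)\lambda_f(n)$ with both indices fixed. Here I would first invoke the Hecke relation $\lambda_f(p^2)^2=\lambda_f(p^4)+\lambda_f(p^2)+1$ (the $a=b=2$ case of $\lambda_f(p^a)\lambda_f(p^b)=\sum_{0\le j\le\min(a,b)}\lambda_f(p^{a+b-2j})$, in the same spirit as (2.1)), which linearizes the square. Summing against $\|f\|^{-2}$ then splits the quantity into $\sum_f\lambda_f(p^4)\|f\|^{-2}$ together with the expressions already estimated in (2.3) and (2.5). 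Applying Lemma 2.1 to the remaining piece with $n=p^4$, $m=1$ gives a vanishing delta term and an error of size $\frac{(4\pi)^{k-1}}{\Gamma(k-1)}(\log 3p^4)^2\,\frac{p}{k^{1/2}}$, using $(p^4)^{1/4}=p$ and $d((p^4,1))=1$. Combining the three contributions, the main term $\frac{(4\pi)^{k-1}}{\Gamma(k-1)}$ is furnished entirely by (2.5), and the dominant error is the $\lambda_f(p^4)$ term, matching (2.4).

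I do not expect a genuine obstacle: the entire argument reduces to bookkeeping of the error term in Lemma 2.1. The only point demanding care is tracking the factors $d((m,n))$ and $(mn)^{1/4}$ correctly for each choice of indices, and verifying that the $\lambda_f(p^4)$ error (of order $p$) dominates the $\lambda_f(p^2)$ error (of order $p^{1/2}$) and the $O(k^{-1/2})$ term from (2.5), so that all three error contributions collapse into the single bound displayed in (2.4).
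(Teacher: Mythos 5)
Your proposal is correct, and for (2.3) and (2.5) it coincides exactly with the paper's proof (specializing Lemma 2.1 to $n=p^{2},m=1$ and $m=n=1$ respectively). For (2.4) you take a genuinely different route, but for a mistaken reason: you claim $\lambda_{f}(p^{2})^{2}$ ``is not itself of the form $\lambda_f(m)\lambda_f(n)$ with both indices fixed,'' which is false --- it is exactly that form with $m=n=p^{2}$, and Lemma 2.1 places no restriction forbidding $m=n$. The paper simply puts $m=n=p^{2}$ into (2.2): the delta term $\delta(0)=1$ gives the main term, and the error carries $d\big((p^{2},p^{2})\big)=d(p^{2})=3$ and $(p^{4})^{1/4}=p$, yielding the stated bound in one line. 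Your alternative --- linearizing via the Hecke relation $\lambda_{f}(p^{2})^{2}=\lambda_{f}(p^{4})+\lambda_{f}(p^{2})+1$ and applying the trace formula with $n=p^{4},m=1$ together with the already-proved (2.3) and (2.5) --- is also valid: the $\lambda_{f}(p^{4})$ piece contributes error of order $(\log 3p^{4})^{2}\,p\,k^{-1/2}$, which dominates the $p^{1/2}$ error from (2.3) and the $k^{-1/2}$ error from (2.5), and the main term comes from the constant $1$ via (2.5), reproducing (2.4). Your route has the mild virtue of matching the spirit of the paper's other arguments (the proof of Lemma 2.3 likewise linearizes powers of eigenvalues by Hecke relations before invoking Lemma 2.2), at the cost of an extra step the direct specialization avoids; neither choice affects the final bounds.
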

\begin{proof}
For (2.3), put $n=p^{2}, m=1$ in (2.2). For (2.4), put $m=n=p^{2}$ in (2.2). For (2.5), put $m=n=1$ in (2.2). 
\end{proof}

\begin{lemma}  Let $N>0$ be sufficiently big. Then
$$ \sum_{f \in S_{k}} \frac{1}{\|f\|^{2}} \sum_{p \leq N} \big(\lambda_{f}(p)\big)^{4} = \sum_{f \in S_{k}} \frac{1}{\|f\|^{2}} \big(2+O(\frac{N (\log N)^{2}}{k^{\frac{1}{2}}})\big) \big(\sum_{p \leq N} 1 \big)$$
as $N \rightarrow \infty.$
\end{lemma}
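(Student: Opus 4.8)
The plan is to reduce everything to the trace formula (Lemma 2.1) via the Hecke relations, exactly as the paper does for the second power in (2.1). First I would express $\lambda_f(p)^4$ as a linear combination of the $\lambda_f(p^{2j})$. Starting from $\lambda_f(p)^2=\lambda_f(p^2)+1$ and the Hecke multiplication rule $\lambda_f(p^2)^2=\lambda_f(p^4)+\lambda_f(p^2)+1$, squaring the first identity gives
$$\lambda_f(p)^4=\big(\lambda_f(p^2)+1\big)^2=\lambda_f(p^4)+3\lambda_f(p^2)+2.$$
This isolates the constant $2$, which will produce the main term, and leaves two genuinely off-diagonal Hecke eigenvalues to be averaged away.

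Next I would interchange the (finite) sums over $f$ and $p$ and treat the three pieces separately. The constant contributes $2\sum_{p\le N}\sum_f\|f\|^{-2}=2\,\pi(N)\sum_f\|f\|^{-2}$ by (2.5), which is precisely the main term $2\big(\sum_f\|f\|^{-2}\big)\sum_{p\le N}1$ in the statement. The term $3\lambda_f(p^2)$ is controlled by (2.3). For $\lambda_f(p^4)$ I would apply the trace formula (2.2) directly with $m=1$, $n=p^4$: then $\delta(m-n)=0$, $(m,n)=1$ and $(mn)^{1/4}=p$, so $\sum_f\lambda_f(p^4)\|f\|^{-2}=O\!\big(\tfrac{(4\pi)^{k-1}}{\Gamma(k-1)}(\log 3p^4)^2\,p\,k^{-1/2}\big)$.

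Finally I would sum the three error contributions over $p\le N$ and check they fit the stated error $O\!\big(N(\log N)^2k^{-1/2}\big)\,\pi(N)\sum_f\|f\|^{-2}$. The dominant one is from $\lambda_f(p^4)$: since $\log 3p^4\ll\log N$ uniformly for $p\le N$ and $\sum_{p\le N}p\le N\,\pi(N)$, one gets
$$\sum_{p\le N}(\log 3p^4)^2\,p\ll(\log N)^2\,N\,\pi(N).$$
Multiplying by $\tfrac{(4\pi)^{k-1}}{\Gamma(k-1)}k^{-1/2}\asymp k^{-1/2}\sum_f\|f\|^{-2}$ (using (2.5)) reproduces exactly the claimed error, while the contribution of $3\lambda_f(p^2)$ (of size $N^{3/2}\log N$ in the analogous count) and the error term in (2.5) are of smaller order and absorbed. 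The only point that needs care is the power of $\log N$: the crude bound $\sum_{p\le N}p\le N^2$ would lose a factor $\log N$ and leave $(\log N)^3$, so I would invoke the Chebyshev estimate $\pi(N)\ll N/\log N$ (equivalently $\sum_{p\le N}p\ll N^2/\log N$) to bring the error down to the stated $(\log N)^2$. This bookkeeping of logarithmic factors is the only real obstacle; the remainder is a direct application of Lemma 2.1 and Lemma 2.2.
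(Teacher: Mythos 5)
Your proof is correct and follows essentially the same route as the paper: Hecke relations to isolate a constant main term, then the Petersson trace formula (Lemma 2.1) for the remaining eigenvalue sums, with the same error bookkeeping $\sum_{p\le N}(\log 3p^{4})^{2}\,p\ll (\log N)^{2}N\,\pi(N)$ (your Chebyshev remark is unnecessary, since the trivial bound $\sum_{p\le N}p\le N\pi(N)$ already gives the stated $(\log N)^{2}$, as you in fact use). The only cosmetic difference is that you fully linearize $\lambda_f(p)^{4}=\lambda_f(p^{4})+3\lambda_f(p^{2})+2$ and treat $\lambda_f(p^{4})$ via (2.2) with $m=1$, $n=p^{4}$ (no diagonal), whereas the paper stops at $\lambda_f(p^{2})^{2}+2\lambda_f(p^{2})+1$ and obtains the second unit of the main term from the diagonal $m=n=p^{2}$ in (2.4); since $(mn)^{1/4}=p$ in both cases, the error terms are of identical size.
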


\begin{proof}

By Hecke relations, $\lambda_{f}(p)^{4} = 1+2\lambda_{f}(p^{2})+\lambda_{f}(p^{2})^{2}.$ From the main terms in (2.4) and (2.5), we get $$\sum_{f \in S_{k}} \frac{2}{\|f\|^{2}}\big(\sum_{p \leq N} 1 \big),$$
and the remainder terms come from the error terms in (2.3), (2.4), (2.5).
\end{proof}

\section{Main Theorem}
For convenience, let 
\begin{equation}
A_{f}(N,h):= \sum_{N-h \leq p \leq N} \lambda_{f}(p)^{2}.\end{equation}
\begin{proposition} Let $\epsilon>0, \eta>0$ be fixed positive numbers. Let $N$ be a sufficiently big number, and $y=\eta \log N.$ Let $k$ be an even number such that $\thinspace N^{2+\epsilon}=o_{\epsilon}(k).$
Then
\begin{equation}
 \sum_{f \in S_{k}}\frac{1}{ \|f\|^{2}} \sum_{n \leq N} A_{f}(n,y)^{2} =  \sum_{f \in S_{k}}\frac{1}{ \|f\|^{2}}\sum_{n \leq N} \Big(\big(\pi(n-y)-\pi(n)\big)^{2}+\eta+o_{\epsilon}(1)\Big)
\nonumber.  
\end{equation}

\end{proposition}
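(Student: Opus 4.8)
The plan is to open the square, split into diagonal and off-diagonal primes, apply the Hecke relation (2.1) to peel off the constant from each factor, and then average over $f$ with the trace formula (Lemma 2.2, equivalently Lemma 2.3). Throughout write $\Theta_k=\frac{(4\pi)^{k-1}}{\Gamma(k-1)}$ for the common factor in those lemmas, and let $r(n)=\pi(n)-\pi(n-y)$ denote the number of primes in $[n-y,n]$. Opening the square,
\[
A_f(n,y)^2=\sum_{n-y\le p\le n}\lambda_f(p)^4+\sum_{\substack{n-y\le p,q\le n\\ p\ne q}}\lambda_f(p)^2\lambda_f(q)^2,
\]
the first (diagonal) sum running over the $r(n)$ primes in the interval and the second (off-diagonal) over its $r(n)^2-r(n)$ ordered pairs of distinct primes.

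For the diagonal I would use $\lambda_f(p)^4=1+2\lambda_f(p^2)+\lambda_f(p^2)^2$ from (2.1) together with (2.3)--(2.5): the terms $1$ and $\lambda_f(p^2)^2$ each produce a main term $\Theta_k$ while $\lambda_f(p^2)$ is pure error, so $\sum_{f\in S_k}\|f\|^{-2}\lambda_f(p)^4=2\Theta_k+O\big(\Theta_k(\log N)^2 p\,k^{-1/2}\big)$; summed over the $r(n)$ primes this gives $2r(n)\Theta_k$ up to error. For the off-diagonal, (2.1) gives $\lambda_f(p)^2\lambda_f(q)^2=\lambda_f(p^2)\lambda_f(q^2)+\lambda_f(p^2)+\lambda_f(q^2)+1$; since $p\ne q$ we have $(p^2,q^2)=1$, so the delta term in (2.2) vanishes for $\lambda_f(p^2)\lambda_f(q^2)$ and, together with (2.3), all three non-constant pieces are error. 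Only the constant survives, contributing $\Theta_k$ per pair, i.e. $(r(n)^2-r(n))\Theta_k$ in total. Adding the two pieces,
\[
\sum_{f\in S_k}\frac{1}{\|f\|^2}A_f(n,y)^2=\big(r(n)^2+r(n)\big)\Theta_k+\mathcal{E}(n),
\]
with $\mathcal{E}(n)$ the accumulated error.

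Summing over $n\le N$, the quadratic part $\Theta_k\sum_{n\le N}r(n)^2$ already matches the right-hand side because $r(n)^2=(\pi(n-y)-\pi(n))^2$ and $\Theta_k$ agrees with $\sum_{f\in S_k}\|f\|^{-2}$ up to the relative error $O(k^{-1/2})$ of (2.5). The linear part $\Theta_k\sum_{n\le N}r(n)$ is where the average difference $\eta$ enters: interchanging summation counts, for each prime $p\le N$, the integers $n$ with $p\le n\le p+y$, of which there are $y+O(1)$, so by the prime number theorem $\sum_{n\le N}r(n)=y\,\pi(N)+O(\log N)=\eta N+o(N)=\sum_{n\le N}(\eta+o(1))$. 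Thus the combined main term is $\Theta_k\sum_{n\le N}\big((\pi(n-y)-\pi(n))^2+\eta+o(1)\big)$, which is exactly the asserted right-hand side.

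The main obstacle is showing $\sum_{n\le N}\mathcal{E}(n)=\Theta_k\sum_{n\le N}o_\epsilon(1)$, and this is dominated by the off-diagonal terms $\sum_{f\in S_k}\|f\|^{-2}\lambda_f(p^2)\lambda_f(q^2)$. By (2.2) each of these is $O\big(\Theta_k(\log N)^2(pq)^{1/2}k^{-1/2}\big)=O\big(\Theta_k(\log N)^2 N k^{-1/2}\big)$ since $p,q\le N$; using the crude bound $r(n)\le y+1=O(\log N)$ there are $O((\log N)^2)$ pairs for each $n$, so the total off-diagonal error over $n\le N$ is $O\big(\Theta_k N^2(\log N)^4 k^{-1/2}\big)$. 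This is $o_\epsilon(N)\Theta_k$ precisely when $k^{1/2}/\big(N(\log N)^4\big)\to\infty$, which the hypothesis $N^{2+\epsilon}=o_\epsilon(k)$ guarantees (then $k\gg N^{2+\epsilon}\gg N^2(\log N)^8$). The diagonal errors, of total size $O\big(\Theta_k N^2(\log N)^3 k^{-1/2}\big)$, and the $O(k^{-1/2})$ discrepancy between $\Theta_k$ and $\sum_{f\in S_k}\|f\|^{-2}$ applied to $\sum_{n\le N}r(n)^2=O(N(\log N)^2)$, are smaller and disposed of identically. Combining the main term with these negligible errors yields the stated identity.
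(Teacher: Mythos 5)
Your proposal is correct and follows essentially the same route as the paper's proof of Proposition 3.1: open the square, split into diagonal and off-diagonal prime pairs, use the Hecke relation $\lambda_f(p)^2=\lambda_f(p^2)+1$ to peel off constants, apply the Petersson trace formula (Lemmas 2.1--2.3) so that only the constants survive under the hypothesis $N^{2+\epsilon}=o_\epsilon(k)$, and identify the linear term $\sum_{n\le N}\big(\pi(n)-\pi(n-y)\big)=\eta N+o(N)$ (the paper's $\frac{yN}{\log N}(1+o_\epsilon(1))$) as the source of the extra $\eta$. One minor slip: in your interchange the boundary error is $O(\pi(N))=O(N/\log N)$ (plus $O(y^2)$ from primes near $N$), not $O(\log N)$, but this is still $o(N)$ and does not affect the conclusion.
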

\begin{proof}
By the definition (3.1), 
\begin{equation} 
\sum_{n \leq N} A_{f}(N,y)^{2} = \sum_{n\leq N} \sum_{1 \leq d_{1},d_{2} \leq y} \lambda_{f}(n-d_{1})^{2}1_{n-d_{1} \in P} \lambda_{f}(n-d_{2})^{2}1_{n-d_{2} \in P}.\end{equation}
Let's split the inner sum into diagonal terms and off-diagonal terms 
\begin{equation}\sum_{n-y \leq p \leq n} \lambda_{f}(p)^{4} + 2\sum_{1 \leq d_{1}<d_{2} \leq y} \lambda_{f}(n-d_{1})^{2}1_{n-d_{1} \in P} \lambda_{f}(n-d_{2})^{2}1_{n-d_{2} \in P}. \end{equation} 
First, let's consider the digonal terms $$\sum_{n\leq N} \sum_{n-y \leq p \leq n} \lambda_{f}(p)^{4}.$$ 
By Hecke relations, $$\lambda_{f}(p)^{4}= \lambda_{f}(p^{2})^{2}+ 2\lambda_{f}(p^{2})+1.$$
By the assumption $N^{2+\epsilon}=o_{\epsilon}(k),$ Lemma 2.3,
\begin{equation}\begin{split} \sum_{f \in S_{k}} &\frac{1}{\|f\|^{2}} \sum_{n \leq N} \sum_{n-y \leq p \leq n}\big(\lambda_{f}(p)\big)^{4} \\&= \sum_{f \in S_{k}} \frac{1}{\|f\|^{2}} \big(2+o(1)\big)  \big(\sum_{n \leq N}\sum_{n-y \leq p \leq n} 1 \big)\\&= \sum_{f \in S_{k}} \frac{1}{\|f\|^{2}} \big(1+o(1)\big) \big(\sum_{n \leq N}\sum_{n-y \leq p \leq n} 1 \big) + \sum_{f \in S_{k}} \frac{1}{\|f\|^{2}}\big(\sum_{n \leq N} \sum_{n-y \leq p \leq n} 1 \big)\\&=  \sum_{f \in S_{k}} \frac{1}{\|f\|^{2}} \big(1+o(1)\big) \big(\sum_{n \leq N}\sum_{n-y \leq p \leq n} 1 \big)+\sum_{f \in S_{k}} \frac{1}{\|f\|^{2}}  \sigma(2,1)\sum_{\vec{d}}\pi_{\vec{d}}(N)\end{split} \end{equation} where the summation over $\vec{d}$ means the sum over $\vec{d}=(d) \in \mathbb{N}, 1\leq d \leq y$. 
Let's consider the off-diagonal terms
$$ \sum_{n \leq N}\sum_{1 \leq d_{1}<d_{2} \leq y} \lambda_{f}(n-d_{1})^{2} \lambda_{f}(n-d_{2})^{2}1_{n-d_{1},n-d_{2} \in P}.$$
By (2.1),
\begin{equation}\begin{split}  \lambda_{f}(n-d_{1})^{2} &\lambda_{f}(n-d_{2})^{2}1_{n-d_{1},n-d_{2} \in P}\\&=\lambda_{f}((n-d_{1})^{2})1_{n-d_{1} \in P}+\lambda_{f}((n-d_{2})^{2})1_{n-d_{2} \in P}\\&\;\;\;+\lambda_{f}((n-d_{1})^{2}) \lambda_{f}((n-d_{2})^{2})1_{n-d_{1},n-d_{2} \in P}\\&\;\;\;+1_{n-d_{1},n-d_{2} \in P}.\end{split}\nonumber \end{equation}
From $1_{n-d_{1},n-d_{2} \in P}$ in the above equation, we get   \begin{equation} 2\sum_{f \in S_{k}} \frac{1}{\|f\|^{2}} \sum_{n \leq N} \sum_{1\leq d_{1}<d_{2} \leq y} 1_{n-d_{1},n-d_{2} \in P} = \sum_{f \in S_{k}} \frac{1}{\|f\|^{2}} \sigma(2,2) \sum_{\vec{d}} \pi_{\vec{d}}(N) \end{equation} where the summation over $\vec{d}$ means the summation over $\vec{d}=(d_{1},d_{2}) \in \mathbb{N}^{2},$ $1\leq d_{1}<d_{2}\leq y.$
By (2.3),
\begin{equation}\begin{split}\sum_{f \in S_{k}}&\frac{1}{\|f\|^{2}} \sum_{n \leq N} \sum_{1 \leq d_{1} <d_{2} \leq y} \lambda_{f}((n-d_{1})^{2})1_{n-d_{1} \in P}+\lambda_{f}((n-d_{2})^{2})1_{n-d_{2} \in P}
\\&=  \sum_{n \leq N} \sum_{1 \leq d_{1} <d_{2} \leq y}   \sum_{f \in S_{k}}\frac{1}{\|f\|^{2}}  \lambda_{f}((n-d_{1})^{2})1_{n-d_{1} \in P}+\lambda_{f}((n-d_{2})^{2})1_{n-d_{2} \in P}
\\& =O_{\epsilon}\Big(\frac{(4\pi)^{k-1}}{\Gamma(k-1) k^{\frac{1}{2}}}N^{\frac{3}{2}+\epsilon}y^{2}\Big).\end{split} \end{equation}
By the assumption $N^{2+\epsilon}=o_{\epsilon}(k)$ and (2.5),
$$\frac{(4\pi)^{k-1}}{\Gamma(k-1) k^{\frac{1}{2}}}N^{\frac{1}{2}+\epsilon} = o \Big( \sum_{f \in S_{k}}\frac{1}{\|f\|^{2}}\Big).$$
Therefore, (3.6) is bounded by $$o_{\epsilon}\Big(N\sum_{f \in S_{k}} \frac{1}{\|f\|^{2}}\Big)$$ for sufficiently big $N.$
By Lemma 2.1,
\begin{equation}\begin{split} \sum_{f \in S_{k}}&\frac{1}{\|f\|^{2}}\lambda_{f}((n-d_{1})^{2}) \lambda_{f}((n-d_{2})^{2})1_{n-d_{1},n-d_{2} \in P}\\&=O\big((\log N)^{2}\frac{N(4\pi)^{k-1}}{\Gamma(k-1)k^{\frac{1}{2}}}1_{n-d_{1},n-d_{2} \in P}\big).\end{split}\end{equation}
Therefore by the assumption $N^{2+\epsilon}=o_{\epsilon}(k)$ and (2.5),
\begin{equation}\begin{split} \sum_{1 \leq d_{1} < d_{2} \leq y} \sum_{n \leq N}&\sum_{f \in S_{k}}\frac{1}{\|f\|^{2}}\lambda_{f}\big((n-d_{1})^{2}\big) \lambda_{f}\big((n-d_{2})^{2}\big)1_{n-d_{1},n-d_{2} \in P} \\&=O\Big((\log N)^{2}\frac{N (4\pi)^{k-1}}{\Gamma(k-1)k^{\frac{1}{2}}}\sum_{1 \leq d_{1} <d_{2}  \leq y}\sum_{n \leq N}1_{n-d_{1},n-d_{2} \in P}\Big) \\&=o_{\epsilon} \Big(\sum_{f \in S_{k}} \frac{1}{\| f\|^{2}}\sum_{1 \leq d_{1} <d_{2}  \leq y}\sum_{n \leq N}1_{n-d_{1},n-d_{2} \in P}\Big).\end{split} \end{equation}
Hence,  
\begin{equation}\begin{split}
\sum_{f \in S_{k}}\frac{1}{\|f\|^{2}}\sum_{n \leq N} A_{f}(n,y)^{2}&=
\Big(\sum_{f \in S_{k}}\frac{1}{\|f\|^{2}}\sum_{n \leq N} (\pi(n-y)-\pi(n))^{2}\Big)\big(1+o_{\epsilon}(1)\big) \\&+ \sum_{f \in S_{k}} \frac{1}{\|f\|^{2}}\frac{yN}{\log N}\big(1+o_{\epsilon}(1)\big).
\end{split}\end{equation}
\end{proof}
By applying the Hardy-Littlewood prime $2$-tuples conjecture (the last equation in (1.5)), we get the following theorem.
\begin{theorem}Assume the Hardy-Littlewood prime $2$-tuples conjecture. Let $\epsilon>0, \eta>0$ be fixed positive numbers. Let $N$ be a sufficiently big number, and $y=\eta \log N.$ Let $k$ be an even number such that $\thinspace N^{2+\epsilon}=o_{\epsilon}(k).$
Then
\begin{equation} \begin{split}
\frac{1}{N} \sum_{f \in S_{k}}\frac{1}{ \|f\|^{2}} \sum_{n \leq N}  \Big(\sum_{n-y \leq p \leq n} \lambda_{f}(p)^{2}\Big)^{2} &=  \sum_{f \in S_{k}}\frac{1}{ \|f\|^{2}}\big(m_{2}(\eta)+\eta+o_{\epsilon}(1)\big) \\&=\frac{(4\pi)^{k-1}}{\Gamma(k-1)}\big(m_{2}(\eta)+\eta+o_{\epsilon}(1)\big).
\nonumber  
\end{split}\end{equation}
\end{theorem}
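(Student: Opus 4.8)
The plan is to deduce the theorem directly from Proposition 3.2, so that essentially no new analytic input is required beyond Gallagher's combinatorial identity (1.5) and the trace formula. First I would observe that the bracketed expression on the right-hand side of Proposition 3.2, namely $(\pi(n-y)-\pi(n))^{2}+\eta+o_{\epsilon}(1)$, does not depend on $f$, so the weight $\sum_{f\in S_{k}}\|f\|^{-2}$ factors out of the double sum completely. Since $A_{f}(n,y)=\sum_{n-y\le p\le n}\lambda_{f}(p)^{2}$ by the definition (3.1), dividing the identity of Proposition 3.2 through by $N$ reduces the task to evaluating the single average $\frac{1}{N}\sum_{n\le N}\big((\pi(n-y)-\pi(n))^{2}+\eta+o_{\epsilon}(1)\big)$.

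Next I would treat the three pieces of this average separately. For the main piece I invoke Gallagher's formula (1.5) with $j=2$: under the Hardy--Littlewood prime $2$-tuples conjecture this gives $\sum_{n\le N}(\pi(n)-\pi(n-y))^{2}=N\big(m_{2}(\eta)+o(1)\big)$, so after division by $N$ it contributes $m_{2}(\eta)+o(1)$. Here I would note that only the $2$-tuples case of (1.4) is needed, which is exactly the hypothesis of the theorem; the full $j$-tuples conjecture plays no role. The constant term $\eta$ contributes $\frac{1}{N}\sum_{n\le N}\eta=\eta+o(1)$, and the per-$n$ error $o_{\epsilon}(1)$, being uniform in $n$, survives the averaging as $\frac{1}{N}\sum_{n\le N}o_{\epsilon}(1)=o_{\epsilon}(1)$. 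Combining the three gives the first claimed equality, $\frac{1}{N}\sum_{f}\|f\|^{-2}\sum_{n\le N}A_{f}(n,y)^{2}=\big(\sum_{f}\|f\|^{-2}\big)\big(m_{2}(\eta)+\eta+o_{\epsilon}(1)\big)$.

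For the second equality I would apply the trace formula in the form (2.5), which yields $\sum_{f\in S_{k}}\|f\|^{-2}=\frac{(4\pi)^{k-1}}{\Gamma(k-1)}\big(1+O(k^{-1/2})\big)$. Because the hypothesis $N^{2+\epsilon}=o_{\epsilon}(k)$ forces $k\to\infty$ as $N\to\infty$, the factor $O(k^{-1/2})$ multiplied against the bounded quantity $m_{2}(\eta)+\eta+o_{\epsilon}(1)$ is itself $o(1)$ and is absorbed into the error term. This produces $\frac{(4\pi)^{k-1}}{\Gamma(k-1)}\big(m_{2}(\eta)+\eta+o_{\epsilon}(1)\big)$, which is the right-hand side of the theorem.

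The genuine analytic content -- the Hecke relation (2.1) used to peel the constant off $\lambda_{f}(p)^{4}$, the Petersson/trace formula of Lemma 2.1 used to kill the off-diagonal terms $\lambda_{f}((n-d_{1})^{2})\lambda_{f}((n-d_{2})^{2})$, and the verification in (3.8) that these off-diagonal contributions are $o_{\epsilon}\big(N\sum_{f}\|f\|^{-2}\big)$ -- is already carried out in the proof of Proposition 3.2. Consequently the only real obstacle at this stage is bookkeeping: confirming that the error term in Proposition 3.2 is genuinely uniform in $n$ so that it survives summation and normalization by $N$, and that the range restriction $N^{2+\epsilon}=o_{\epsilon}(k)$ is strong enough to absorb the trace-formula error into $o_{\epsilon}(1)$. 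Both points are immediate from the stated hypotheses, so the deduction is routine.
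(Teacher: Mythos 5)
Your proposal is correct and takes essentially the same route as the paper: the paper deduces this theorem from Proposition 3.1 (which you cite as Proposition 3.2) simply by applying Gallagher's identity (1.5) with $j=2$ under the Hardy--Littlewood $2$-tuples conjecture to evaluate $\sum_{n\leq N}(\pi(n)-\pi(n-y))^{2}=N\big(m_{2}(\eta)+o(1)\big)$, and then using (2.5) to replace $\sum_{f\in S_{k}}\|f\|^{-2}$ by $\frac{(4\pi)^{k-1}}{\Gamma(k-1)}\big(1+O(k^{-1/2})\big)$, with the trace-formula error absorbed exactly as you describe. Your additional bookkeeping remarks (that only the $2$-tuples case of (1.4) is needed, and that the $o_{\epsilon}(1)$ term survives the averaging) match the paper's intended argument.
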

\begin{remark}
It is known that $|S_{k}| \sim \frac{k}{12}.$ 
Let $\Lambda_{f}(p)= \frac{\Gamma(k-1)^{\frac{1}{4}}k^{\frac{1}{4}}\lambda_{f}(p)}  {12^{\frac{1}{4}}\|f\|^{\frac{1}{2}}(4\pi)^{\frac{k-1}{4}}}.$ Then with the conditions in Theorem 3.2, 

\begin{equation} \frac{1}{N} \sum_{f \in S_{k}} \sum_{n \leq N}  \Big(\sum_{n-y \leq p \leq n} \Lambda_{f}(p)^{2}\Big)^{2} =\sum_{f \in S_{k}} \Big(m_{2}(\eta)+\eta+o_{\epsilon}(1)\Big). \end{equation}
\end{remark}
\section{Proof of Theorem 1.3, Theorem 1.5}
\subsection{Proof of Theorem 1.3}
\begin{proof}
The proof is basically same as the proof of Theorem 3.2.
For the diagonal terms, we just need to replace (3.4) with 
\begin{equation}\begin{split}
\sum_{j} \sum_{n \leq N} & \sum_{n-y \leq p \leq n}\ \lambda_{\psi_{j}}(p)^{4} \frac{\zeta(2)}{L(1,\mathrm{sym}^{2}\psi_{j})} e^{-\frac{t_{j}}{T}} \\&= \sum_{j}  \frac{\zeta(2)}{L(1,\mathrm{sym}^{2}\psi_{j})} e^{-\frac{t_{j}}{T}}\big(2+o(1)\big)\big(\sum_{n \leq N}  \sum_{n-y \leq p \leq n}\ 1\big)\end{split}\end{equation} (By Lemma 1.2 and the assumption  $\thinspace N^{1+\epsilon}=o_{\epsilon}(T)$).
For the off-diagonal terms, we need to replace (3.6) with 
\begin{equation} \begin{split}
\sum_{j}  \frac{\zeta(2)e^{-\frac{t_{j}}{T}}}{L(1,\mathrm{sym}^{2}\psi_{j})} &\sum_{n \leq N} \sum_{1 \leq d_{1} <d_{2} \leq y} \lambda_{\psi_{j}}((n-d_{1})^{2})1_{n-d_{1} \in P}+\lambda_{\psi_{j}}((n-d_{2})^{2})1_{n-d_{2} \in P}\\&=O_{\epsilon}\Big((T^{1+\epsilon}N^{\epsilon} +N^{1+\epsilon})y^{2}N\Big).
\end{split} \end{equation}
And then, we need to replace (3.7) with
\begin{equation}\sum_{j}\frac{\zeta(2)e^{-\frac{t_{j}}{T}}}{L(1,\mathrm{sym}^{2}\psi_{j})} \lambda_{\psi_{j}}((n-d_{1})^{2}) \lambda_{\psi_{j}}((n-d_{2})^{2})1_{n-d_{1},n-d_{2} \in P}=O_{\epsilon}\big(T^{1+\epsilon}N^{\epsilon} +N^{2+\epsilon}).\end{equation}
Therefore by the assumption $\thinspace N^{1+\epsilon}=o_{\epsilon}(T)$ and (4.3),
\begin{equation}\begin{split} & \sum_{1 \leq d_{1} < d_{2} \leq y} \sum_{n \leq N} \sum_{j}  \frac{\zeta(2)e^{-\frac{t_{j}}{T}}}{L(1,\mathrm{sym}^{2}\psi_{j})}  \lambda_{\psi_{j}}\big((n-d_{1})^{2}\big) \lambda_{\psi_{j}}\big((n-d_{2})^{2}\big)1_{n-d_{1},n-d_{2} \in P} \\&=O_{\epsilon}\Big((T^{1+\epsilon}N^{\epsilon}+N^{2+\epsilon})\sum_{1 \leq d_{1} <d_{2}  \leq y}\sum_{n \leq N}1_{n-d_{1},n-d_{2} \in P}\Big) \\&=o_{\epsilon} \Big(\sum_{j}\frac{\zeta(2)e^{-\frac{t_{j}}{T}}}{L(1,\mathrm{sym}^{2}\psi_{j})} \sum_{1 \leq d_{1} <d_{2}  \leq y}\sum_{n \leq N}1_{n-d_{1},n-d_{2} \in P}\Big).\end{split}\end{equation}
Finally, 
\begin{equation} \begin{split}
 \sum_{j} \frac{\zeta(2)e^{-\frac{t_{j}}{T}}}{L(1,\mathrm{sym}^{2}\psi_{j})} &  \sum_{n \leq N} \Big(\sum_{n-y \leq p \leq n} \lambda_{\psi_{j}}(p)^{2} \Big)^{2}
\\ & =\sum_{j} \frac{\zeta(2)e^{-\frac{t_{j}}{T}}}{L(1,\mathrm{sym}^{2}\psi_{j})}  \sum_{n \leq N} \big((\pi(n-y)-\pi(n))^{2}+\eta+o_{\epsilon}(1)\big),\end{split}\end{equation}
and by the Hardy-Littlewood prime 2-tuples conjecture,
 $$\sum_{n \leq N} \big((\pi(n-y)-\pi(n))^{2}+\eta+o_{\epsilon}(1)\big)=N\big(m_{2}(\eta)+\eta+o_{\epsilon}(1)\big).$$
\end{proof}
\subsection{Proof of Theorem 1.5}
\begin{proof}
  By squaring out, 
\begin{equation} \begin{split}
\sum_{n \leq N}&\Big(\sum_{n-y \leq p \leq n} |A_{j}(p,1)|^{2}\Big)^{2} \\&= \sum_{n\leq N} \sum_{1 \leq d_{1},d_{2} \leq y} |A_{j}(n-d_{1},1)|^{2}1_{n-d_{1} \in P} |A_{{j}}(n-d_{2},1)|^{2}1_{n-d_{2} \in P}.\end{split}\end{equation}
We split the inner sum into digonal terms and off-diagonal terms 
\begin{equation}\sum_{n-y \leq p \leq n} |A_{j}(p,1)|^{4}+ 2\sum_{1 \leq d_{1}<d_{2} \leq y} |A_{j}(n-d_{1},1)|^{2}1_{n-d_{1} \in P} |A_{j}(n-d_{2},1)|^{2}1_{n-d_{2} \in P}. \end{equation} 
First, let's consider the digonal terms $$\sum_{n\leq N}\sum_{n-y \leq p \leq n} |A_{j}(p,1)|^{4}.$$ 
By (1.9), Lemma 1.4 and the assumption $N^{\frac{32}{3}+2\epsilon}=o_{\epsilon}(T),$
\begin{equation}\begin{split}\sum_{j}&\frac{1}{\underset{s=1}{\mathrm{Res}}\; L(s,\phi_{j} \times \bar{\phi}_{j})} e^{-\frac{v_{j}}{T^{2}}} \sum_{n \leq N}  \sum_{n-y \leq p \leq n}\ |A_{j}(p,1)|^{4} \\&= \sum_{j}\frac{1}{\underset{s=1}{\mathrm{Res}}\; L(s,\phi_{j} \times \bar{\phi}_{j})} e^{-\frac{v_{j}}{T^{2}}}  \big(2+o(1)\big)  \big(\sum_{n \leq N}  \sum_{n-y \leq p \leq n}\ 1 \big).\end{split} \end{equation}
Let's consider the off-diagonal terms
$$ \sum_{n \leq N}\sum_{1 \leq d_{1}<d_{2} \leq y} |A_{j}(n-d_{1},1)|^{2}1_{n-d_{1} \in P} |A_{j}(n-d_{2},1)|^{2}1_{n-d_{2} \in P}.$$
By (1.9),
\begin{equation}\begin{split}  |A_{j}(n-d_{1},1)|^{2} & |A_{j}(n-d_{2},1)|^{2}1_{n-d_{1},n-d_{2} \in P}=A_{j}(n-d_{1},n-d_{1})1_{n-d_{1} \in P}\\&+A_{j}(n-d_{2},n-d_{2})1_{n-d_{2} \in P}\\&+A_{j}(n-d_{1},n-d_{1}) A_{j}(n-d_{2},n-d_{2})1_{n-d_{1},n-d_{2} \in P}\\&+1_{n-d_{1},n-d_{2} \in P}.\end{split}\nonumber \end{equation}
From $1_{n-d_{1},n-d_{2} \in P}$ in the above equation, we get  \begin{equation} \begin{split} 2\sum_{j}&\frac{e^{-\frac{v_{j}}{T^{2}}}}{\underset{s=1}{\mathrm{Res}}\; L(s,\phi_{j} \times \bar{\phi}_{j})} \sum_{n \leq N} \sum_{1\leq d_{1}<d_{2} \leq y} 1_{n-d_{1},n-d_{2} \in P} \\&= \sum_{j}\frac{e^{-\frac{v_{j}}{T^{2}}}}{\underset{s=1}{\mathrm{Res}}\; L(s,\phi_{j} \times \bar{\phi}_{j})}\sigma(2,2) \sum_{\vec{d}} \pi_{\vec{d}}(N).\end{split}\end{equation} where the summation over $\vec{d}$ means the summation over $\vec{d}=(d_{1},d_{2}) \in \mathbb{N}^{2},$ $1\leq d_{1}<d_{2}\leq y.$
By Lemma 1.4, 
\begin{equation}\begin{split}\sum_{j}&\frac{e^{-\frac{v_{j}}{T^{2}}}}{\underset{s=1}{\mathrm{Res}}\; L(s, \phi_{j} \times \bar{\phi}_{j})}\sum_{n \leq N} \sum_{1 \leq d_{1} <d_{2} \leq y}\Big( A_{j}(n-d_{1},n-d_{1})1_{n-d_{1} \in P} \\&+ A_{j}(n-d_{2},n-d_{2})1_{n-d_{2} \in P} \Big) \\&=O_{\epsilon}\Big(y^{2}N^{3+\epsilon}T^{\frac{37}{8}+\epsilon}\Big). \end{split}\end{equation}
By the assumption $N^{\frac{32}{3}+2\epsilon}=o_{\epsilon}(T)$ and (1.10), (4.10) is bounded by 
$$o_{\epsilon}\Big(N\sum_{j} \frac{e^{-\frac{v_{j}}{T^{2}}}}{\underset{s=1}{\mathrm{Res}}\; L(s, \phi_{j} \times \bar{\phi}_{j})}\Big)$$ for sufficiently big $N.$
By Lemma 1.4,
\begin{equation}\begin{split} \sum_{j}&\frac{e^{-\frac{v_{j}}{T^{2}}}}{\underset{s=1}{\mathrm{Res}}\; L(s, \phi_{j} \times \bar{\phi}_{j})} A_{j}(n-d_{1},n-d_{1}) A_{j}(n-d_{2},n-d_{2})1_{n-d_{1},n-d_{2} \in P}\\&=O_{\epsilon}\big(N^{4+\epsilon} T^{\frac{37}{8}+\epsilon} 1_{n-d_{1},n-d_{2} \in P}\big). \end{split} \end{equation}
Therefore by the assumption $N^{\frac{32}{3}+2\epsilon}=o_{\epsilon}(T),$
\begin{equation}\begin{split} \sum_{1 \leq d_{1} < d_{2} \leq y} &\sum_{n \leq N}\sum_{j}\frac{e^{-\frac{v_{j}}{T^{2}}} A_{j}(n-d_{1},n-d_{1})A_{j}(n-d_{2},n-d_{2})}{\underset{s=1}{\mathrm{Res}}\; L(s, \phi_{j} \times \bar{\phi}_{j})}1_{n-d_{1},n-d_{2} \in P} \\&=O\Big(N^{4+\epsilon}T^{\frac{37}{8}+\epsilon} \sum_{1 \leq d_{1} <d_{2}  \leq y}\sum_{n \leq N}1_{n-d_{1},n-d_{2} \in P}\Big) \\&=o_{\epsilon} \Big(\sum_{j}\frac{e^{-\frac{v_{j}}{T^{2}}}}{\underset{s=1}{\mathrm{Res}}\; L(s, \phi_{j} \times \bar{\phi}_{j})}\sum_{1 \leq d_{1} <d_{2}  \leq y}\sum_{n \leq N}1_{n-d_{1},n-d_{2} \in P}\Big).\end{split} \end{equation}
Finally,
\begin{equation}\begin{split}
\sum_{j}&\frac{e^{-\frac{v_{j}}{T^{2}}}}{\underset{s=1}{\mathrm{Res}}\; L(s, \phi_{j} \times \bar{\phi}_{j})} \sum_{n \leq N} \Big(\sum_{n-y \leq p \leq n} |A_{j}(p,1)|^{2} \Big)^{2}\\&=
\Big(\sum_{j}\frac{e^{-\frac{v_{j}}{T^{2}}}}{\underset{s=1}{\mathrm{Res}}\; L(s, \phi_{j} \times \bar{\phi}_{j})}\sum_{n \leq N} \big(\pi(n-y)-\pi(n)\big)^{2}\Big)\big(1+o_{\epsilon}(1)\big) \\&+ \sum_{j}\frac{e^{-\frac{v_{j}}{T^{2}}}}{\underset{s=1}{\mathrm{Res}}\; L(s, \phi_{j} \times \bar{\phi}_{j})}\frac{yN}{\log N}\big(1+o_{\epsilon}(1)\big),
\end{split}\end{equation}
and by the Hardy-Littlewood prime 2-tuples conjecture,
 $$\sum_{n \leq N} \big((\pi(n-y)-\pi(n)\big)^{2}+\eta+o_{\epsilon}(1)\big)=N\big(m_{2}(\eta)+\eta+o_{\epsilon}(1)\big).$$
\end{proof}

\subsection*{Acknowledgements} The author would like to thank his advisor Professor Xiaoqing Li, for her constant support. The author would also like to thank Professor Milicevic for pointing out that the Hardy-Littlewood conjecture was not included in the statement of Theorem 3.1 in the previous version, and the referee for useful suggestions.  

\bibliographystyle{siam}   
\bibliography{ss1}  
\end{document}